\documentclass[11pt]{amsart}

\usepackage[margin=1in]{geometry}
\usepackage{amsmath,amssymb,amsthm,mathtools}
\usepackage[hidelinks]{hyperref}
\usepackage{microtype}
\usepackage{orcidlink}

\newtheorem{theorem}{Theorem}
\newtheorem{lemma}[theorem]{Lemma}
\newtheorem{remark}[theorem]{Remark}
\newcommand{\F}{\mathbb F}
\newcommand{\I}{\mathcal I}

\title[Increasing Unrefinable Subgroup Chains]{Finite Soluble Groups Need Not
Admit Increasing Unrefinable Subgroup Chains}
\author[Richie Sater]{Richie Sater\,\orcidlink{0009-0007-9051-8207}}
\address{Independent Researcher, United States}
\email{richiesater@gmail.com}
\date{August 11, 2026}
\subjclass[2020]{Primary 20D35; Secondary 20D10, 20C20}
\keywords{finite groups, maximal chains, maximal-subgroup indices, soluble
groups, semidirect products}

\begin{document}
\begin{abstract}
In correspondence beginning on August 8, 2026, V.~S.~Monakhov and
I.~L.~Sokhor communicated the following question to the author: does every
finite group have an unrefinable chain of subgroups whose successive indices
are nondecreasing?
We answer this question in the negative, even among soluble groups.  For
every odd prime $p$, we construct a soluble matrix group
$G_p\leq \operatorname{GL}_5(p)$ of order $2^6p^8$ with no such chain.
These counterexamples form an infinite family whose smallest member,
$G_3$, has order $419904$.  The construction is based on an example due to
Kohler, and the obstruction follows from three elementary calculations of
maximal-subgroup indices.
\end{abstract}
\maketitle

\section{Introduction}

Let $G$ be a nontrivial finite group.  An unrefinable chain of subgroups
\[
  1=G_0<G_1<\cdots<G_n=G
\]
is one in which $G_{i-1}$ is maximal in $G_i$ for every $i$.  Associate
with this chain the indices $j_i=\lvert G_i:G_{i-1}\rvert$.  We call the
chain increasing if
\[
  j_1\leq j_2\leq\cdots\leq j_n.
\]
Monakhov and Sokhor call such a chain a $(<)$-chain~\cite{MS2025}.
In correspondence beginning on August 8, 2026, V.~S.~Monakhov and
I.~L.~Sokhor communicated to the author the following question, stated here
in equivalent terminology.\footnote{The correspondence is the source through
which the author learned of the question; no claim is made here about the
problem's original formulation.  Reference~\cite{MS2025} is cited for
terminology and background, not as a published source of the
universal-existence question.}
\begin{quote}
Does every finite group contain an increasing unrefinable subgroup chain?
\end{quote}
We answer this question in the negative.

\begin{theorem}\label{thm:main}
For every odd prime $p$, there is a soluble group $G_p$ of order $2^6p^8$
which has no increasing unrefinable subgroup chain.
\end{theorem}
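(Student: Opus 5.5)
The plan is to build $G_p$ as a semidirect product $P\rtimes Q$, with $P$ a $p$-group of order $p^8$ and $Q$ a $2$-group of order $2^6$, realized inside $\operatorname{GL}_5(p)$ after Kohler's example. The obstruction then comes from comparing the index of the \emph{last} step of a chain with the indices forced \emph{earlier} in it. Since $G_p$ is soluble, every maximal subgroup of every subgroup has prime-power index. An increasing chain $1=G_0<\dots<G_n=G_p$ therefore ends at a maximal subgroup $M=G_{n-1}$ whose index $j_n$ is at least every earlier index. So it suffices to show: for each maximal subgroup $M$ of $G_p$, the group $M$ has no unrefinable chain with all indices at most $\lvert G_p:M\rvert$, and in particular no increasing one with that bound. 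I will split the maximal subgroups of $G_p$ into classes according to the chief factors of $G_p$, using the standard fact that in a soluble group a maximal subgroup of index $r^a$ complements a chief factor of order $r^a$.

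\emph{First calculation.} Choose the action so that the chief factors of $G_p$ are: a few factors of order $2$ (from $Q/\Phi(Q)$ acting trivially modulo a normal subgroup), and $p$-chief factors of large dimension, namely an irreducible $\F_p Q$-module of dimension $a$ on the top layer of $P$, with no $p$-chief factor of order $p$ at the top. Then every maximal subgroup has index $2$ or $p^a$. If $\lvert G_p:M\rvert=2$, then $M$ still has order divisible by $p$. Any unrefinable chain of $M$ must pass a step of index a power of $p$, which exceeds $2$, so no admissible chain through $M$ exists.

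\emph{Second and third calculations.} If $\lvert G_p:M\rvert=p^a$, then $M$ is a complement to the top $p$-chief factor, and I will compute the chief factors of $M$ itself. The design goal, taken from Kohler's example, is that inside $M$ some $p$-chief factor becomes irreducible of dimension $b>a$ under the action of $M$. For instance, a lower layer of $P$ might be an irreducible module of dimension $b>a$ for the stabilizer $Q$, while its extension with the top layer is split in $G_p$ but not in $M$. Every unrefinable chain of $M$ must cover that chief factor, and in a soluble group covering an irreducible chief factor of order $p^b$ forces some step of index exactly $p^b$. Indeed, in any unrefinable chain the multiset of indices equals the chief-factor orders along some chief series of the relevant subgroups, so one step has index at least $p^b>p^a$. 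That step precedes the final step of index $p^a$, which contradicts monotonicity. If there are several conjugacy classes of complements of index $p^a$ (for example, arising from $H^1$ of the module), I will run the same calculation for each class; this is the third calculation.

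The main obstacle is the concrete module arithmetic in $\operatorname{GL}_5(p)$. I need to exhibit $P$ (for example, unitriangular-type blocks) and $Q$ (monomial or diagonal-by-permutation $2$-elements) and then verify two things uniformly in odd $p$. First, the top $p$-layer is irreducible of dimension $a$ under $Q$, with no $G_p$-chief factor of order $p$ above it. Second, in each complement the relevant lower layer stays irreducible of larger dimension $b$. I would first try $a=2$ and $b=3$ or $4$, with $Q$ acting through a dihedral or quaternion-type group whose $2$- and $4$-dimensional representations over $\F_p$ are irreducible for every odd $p$. I would then check irreducibility by hand, since $p$ odd makes $\F_pQ$ semisimple. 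Finally, I would check the order count $2^6p^8$.
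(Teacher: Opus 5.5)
\textbf{There is a genuine gap.} Your first calculation (the index-$2$ case) is fine, but the key principle of your second calculation is false. You claim that covering an irreducible chief factor of order $p^b$ forces a step of index exactly $p^b$; equivalently, that the indices of an unrefinable chain are the chief-factor orders of some chief series. This fails in every soluble group, because a composition series is an unrefinable chain all of whose indices are prime. For example, in $A_4$ the chain $1<C_2<V_4<A_4$ has indices $2,2,3$, although $V_4$ is a chief factor of order $4$.

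Consequently, the statement you reduce to is false for every soluble group of order $2^6p^8$. That statement says a maximal subgroup $M$ with $\lvert G_p:M\rvert=p^a$ has \emph{no} unrefinable chain with all indices at most $p^a$. But a composition series of $M$ has all indices in $\{2,p\}$. In the paper's group, for instance, $M_X=(Z\times X)\rtimes H$ has an unrefinable chain through the elementary abelian group $Z\times X$, with indices $p$ six times followed by $2$ six times. Only monotonicity excludes such chains, so you cannot discard the increasing condition after the first step.

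What is needed is a top-down descent that controls the maximal-subgroup spectra of the specific subgroups the chain is forced through.
\begin{enumerate}
\item In the paper, every maximal subgroup of $G_p=L\rtimes(D_8\times D_8)$ contains $Z=\Phi(L)$, and $L/Z$ is a sum of two nonisomorphic $2$-dimensional irreducibles. So $\I(G_p)=\{2,p^2\}$, hence $j_n=p^2$ and $G_{n-1}$ is conjugate to $M_X$ or $M_Y$.
\item Next, $\I(M_X)=\{2,p^2,p^4\}$, with the index-$p^2$ maximal subgroups conjugate to $N=Z\rtimes H$. So $j_{n-1}=p^2$ and $G_{n-2}$ is conjugate to $N$.
\item Finally, $\I(N)=\{2,p^4\}$, so $j_{n-2}=2$. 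This forces all lower indices to be $2$, contradicting $p\mid\lvert N\rvert$.
\end{enumerate}
The obstruction is not that a large index must occur somewhere. It is that an index-$2$ step can only be preceded by index-$2$ steps. Also, the chain legitimately continues from $M_X$ by a second step of index $p^2$, so the analysis must go two levels down.

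The mechanism is the reverse of the one you sketch. $Z$ is already irreducible of dimension $4$ in $G_p$, but there it is a Frattini chief factor: $Z=\Phi(L)$, so $L$ does not split over $Z$. It becomes complemented only in $M_X$, where $Z\times X$ is abelian. Finally, your proposal never specifies $P$ and $Q$, and the construction is where most of the content lies.
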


The smallest group in the family has order $2^6 3^8=419904$.  We do not
claim that it is the smallest counterexample.  Our construction is based on
the $t=2$ case of Kohler's work on maximal-subgroup indices
\cite{Kohler1964}.  Kohler used this construction to show that bounds on
the indices of maximal subgroups of a finite group need not be inherited by
its subgroups.  In the matrix model below, the four coordinates occurring in
$x$ and $y$ correspond to four degree-one generators, while the four
entries of $z$ record the independent cross-commutators.  The resulting
$p$-group therefore has order $p^{4+4}=p^8$.  We exploit the resulting
nested pattern of maximal-subgroup indices.

\section{A lemma on maximal subgroups}

For a finite group $A$, write
\[
  \I(A)=\{\lvert A:B\rvert \mid B\text{ is maximal in }A\}.
\]

\begin{lemma}\label{lem:semidirect}
Let $V$ be an elementary abelian $p$-group, let $H$ be a $p'$-group acting
on $V$, put $K=V\rtimes H$, and let $\pi:K\to H$ be the natural
projection.  Every maximal subgroup of $K$ has one of the following forms:
\begin{enumerate}
\item $\pi^{-1}(J)=V\rtimes J$, where $J$ is maximal in $H$;
\item a $V$-conjugate of $W\rtimes H$, where $W$ is a maximal proper
      $H$-submodule of $V$.
\end{enumerate}
\end{lemma}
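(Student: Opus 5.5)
Let $M$ be a maximal subgroup of $K$. We split into two cases according to whether $M$ contains $V$.

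\emph{Case $V\le M$.} Then $M=\pi^{-1}(\pi(M))$. By the correspondence theorem, $M$ is maximal in $K$ exactly when $J:=\pi(M)$ is maximal in $H\cong K/V$. This gives form (1).

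\emph{Case $V\not\le M$.} Maximality forces $MV=K$, since $M<MV\le K$.

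\begin{itemize}
\item[(a)] Put $W=M\cap V$. Because $V$ is abelian and normal, $W$ is normalized by $M$ and by $V$, hence by $MV=K$. Acting by conjugation, $H$ acts on $V$ through $K/V$, so $W$ is an $H$-submodule. It is proper, since $V\not\le M$.
\item[(b)] By coprimality ($|V|$ is a power of $p$ and $H$ is a $p'$-group), a Hall $p'$-subgroup of $M$ has order $|M:W|=|K:V|=|H|$. The same holds for $H$ in $K$. By Schur--Zassenhaus, or simply Hall's theorem in the soluble-by-abelian setting, there is a complement $H^v$ with $v\in V$ and $H^v\le M$. Concretely: $M/W$ is a complement-type extension, $H^v$ is a $p'$-Hall subgroup of $K$, and $M\cap VH$-complements are $V$-conjugate because $H^1(H,V)=0$ for coprime orders. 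Replacing $M$ by $M^{v^{-1}}$, we may assume $H\le M$.
\item[(c)] Then $M=(M\cap V)H=W\rtimes H$ by Dedekind's law, since $H\le M\le VH$.
\end{itemize}

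\emph{Maximality of $W$.} Finally, $W$ must be a maximal proper $H$-submodule. If $W<W'<V$ with $W'$ an $H$-submodule, then $W'H$ is a subgroup strictly between $M$ and $K$ (order comparison), contradicting maximality of $M$.

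The main obstacle is the conjugacy step (b): producing a $V$-conjugate of $H$ inside $M$. I plan to apply Schur--Zassenhaus to the normal Sylow $p$-subgroup $W$ of $M$. This gives a complement $C$ in $M$ with $|C|=|H|$, so $C$ is also a complement to $V$ in $K$. The conjugacy part of Schur--Zassenhaus (automatic here, since $V$ is abelian) then makes $C$ and $H$ conjugate by an element of $K=VH$, hence by an element of $V$.

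The lemma only asks that maximal subgroups have one of these forms, so no converse is needed. The converse is in any case easy: $W\rtimes H$ with $W$ maximal is maximal, since any overgroup $L$ satisfies $L=(L\cap V)H$ with $L\cap V$ an $H$-submodule.
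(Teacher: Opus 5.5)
Your proof is correct and is essentially the paper's argument. You split on whether $V\le M$, observe that $W=M\cap V$ is normal in $K$, and use the conjugacy part of Schur--Zassenhaus to show that $M$ is a $V$-conjugate of $W\rtimes H$. The paper compares the complements $M/W$ and $WH/W$ of $V/W$ in $K/W$. You instead first find a complement to $W$ inside $M$ and then conjugate it to $H$ within $K$, which amounts to the same thing.

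Do drop the aside in step (b) about Hall's theorem in a ``soluble-by-abelian setting''. $H$ is an arbitrary $p'$-group and need not be soluble, so that alternative is not valid in general. The Schur--Zassenhaus argument in your final paragraph is the one that carries the proof.
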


\begin{proof}
Let $T$ be maximal in $K$.  If $V\leq T$, then $T/V$ is maximal in
$K/V\cong H$, which gives the first form.  Suppose $V\nleq T$.  Then
$VT=K$.  Put $W=V\cap T$.
The group $T$ normalizes $W$, while $V$ centralizes $W$; hence $W\lhd K$.
In $K/W$, both $T/W$ and $WH/W$ are complements to $V/W$.  By the
Schur--Zassenhaus theorem, these complements are conjugate by an element of
$V/W$.  Lifting this conjugacy shows that $T$ is $V$-conjugate to
$W\rtimes H$.  Moreover, maximality of $T$ is equivalent to maximality of
$W$ among the proper $H$-submodules of $V$.
\end{proof}

In particular, if $H$ is a $2$-group, the subgroups of the first type have
index $2$.  If $V$ is completely reducible, the indices of the subgroups of
the second type are the orders of the irreducible quotients of $V$.

\section{The counterexample family}

Fix an odd prime $p$ and put $F=\F_p$.  Let
\[
 s=\begin{pmatrix}0&1\\1&0\end{pmatrix},\qquad
 t=\begin{pmatrix}1&0\\0&-1\end{pmatrix},\qquad
 D=\langle s,t\rangle\leq\operatorname{GL}_2(F).
\]
Then $D\cong D_8$, where $D_8$ has order $8$.  The natural $FD$-module
$U=F^2$ is absolutely irreducible.  Indeed, after extending scalars to an
algebraic closure, the only $t$-invariant lines are the two coordinate
lines, and $s$ interchanges them.

Let $H=D\times D$.  Define
\[
 L=\left\{
 \ell(x,y,z)=
 \begin{pmatrix}
 I_2&x&z\\
 0&1&y\\
 0&0&I_2
 \end{pmatrix}\mathrel{:}
 x\in F^{2\times1},\ y\in F^{1\times2},\ z\in F^{2\times2}
 \right\}.
\]
Here the block sizes are $2,1,2$.  Direct multiplication yields
\begin{equation}\label{eq:mult}
 \ell(x,y,z)\ell(x',y',z')
 =\ell(x+x',y+y',z+z'+xy').
\end{equation}
With the convention $[a,b]=a^{-1}b^{-1}ab$, it follows that
\begin{equation}\label{eq:commutator}
 [\ell(x,y,z),\ell(x',y',z')]
 =\ell(0,0,xy'-x'y).
\end{equation}
Thus $\lvert L\rvert=p^8$.  Put
\[
 Z=\{\ell(0,0,z)\},\quad
 X=\{\ell(x,0,0)\},\quad
 Y=\{\ell(0,y,0)\}.
\]
The group $Z$ is central in $L$.  The outer products $xy$ span
$F^{2\times2}$, so \eqref{eq:commutator} gives $L'=Z$.  Since $p$ is odd,
$L$ has exponent $p$; explicitly,
\[
 \ell(x,y,z)^p
 =\ell\left(0,0,pz+\binom p2xy\right)=1.
\]
The standard formula for the Frattini subgroup of a finite $p$-group now
gives
\begin{equation}\label{eq:frattini}
 \Phi(L)=L^pL'=Z.
\end{equation}

Embed $H$ in $\operatorname{GL}_5(F)$ by identifying $(A,B)\in D\times D$
with the block-diagonal matrix $\operatorname{diag}(A,1,B)$.  Conjugation
by this matrix acts by
\[
 x\mapsto Ax,\qquad y\mapsto yB^{-1},\qquad z\mapsto AzB^{-1}.
\]
Consequently, $H$ normalizes both $L$ and $Z$.  Define
\[
 G_p=L\rtimes H\leq\operatorname{GL}_5(p).
\]
Then $Z\lhd G_p$ and
$\lvert G_p\rvert=p^8\cdot 8^2=2^6p^8$.  Since both $L$ and $H$ are
soluble, so is $G_p$.

Set $\overline X=XZ/Z$ and $\overline Y=YZ/Z$.  As an $H$-module,
\begin{equation}\label{eq:modules}
 L/Z=\overline X\oplus\overline Y,
\end{equation}
where $\overline X$ and $\overline Y$ are nonisomorphic irreducible modules
of dimension $2$.  More explicitly, the first factor of $D\times D$ acts
naturally on $\overline X$, while the second acts trivially.  The second
factor acts dually on $\overline Y$, while the first acts trivially.  Thus
\[
 C_H(\overline X)=1\times D,
 \qquad
 C_H(\overline Y)=D\times 1.
\]
These action kernels also show that the two modules are nonisomorphic.
Since $p\nmid \lvert H\rvert=64$, Maschke's theorem applies.  Hence the
only maximal $H$-submodules of $L/Z$ are $\overline X$ and $\overline Y$.

Moreover,
\begin{equation}\label{eq:tensor}
 Z\cong U\boxtimes U^*
\end{equation}
is the external tensor product for the two direct factors of $H$.  Hence
$Z$ is an irreducible $H$-module of dimension $4$.  Indeed, over an
algebraic closure $\overline F$, both $U\otimes_F\overline F$ and
$U^*\otimes_F\overline F$ remain irreducible, and their external tensor
product is irreducible for $D\times D$.

\section{The index obstruction}

\begin{lemma}\label{lem:spectra}
The maximal-index spectra are
\begin{align*}
 \I(G_p)&=\{2,p^2\},\\
 \I(M_X)=\I(M_Y)&=\{2,p^2,p^4\},\\
 \I(N)&=\{2,p^4\},
\end{align*}
where
\[
 M_X=(Z\times X)\rtimes H,\qquad
 M_Y=(Z\times Y)\rtimes H,\qquad
 N=Z\rtimes H.
\]
Every maximal subgroup of $G_p$ of index $p^2$ is $G_p$-conjugate to $M_X$
or $M_Y$.  Every maximal subgroup of $M_X$ of index $p^2$ is
$M_X$-conjugate to $N$, and every maximal subgroup of $M_Y$ of index $p^2$
is $M_Y$-conjugate to $N$.
\end{lemma}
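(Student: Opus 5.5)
The strategy is to apply Lemma~\ref{lem:semidirect} three times, once for each of $G_p$, $M_X$ (and symmetrically $M_Y$), and $N$. Throughout, $H=D\times D$ is a nontrivial $2$-group, so its maximal subgroups all have index $2$. Hence the type-(1) subgroups in Lemma~\ref{lem:semidirect} always exist and always contribute exactly the index $2$. The work therefore lies in the type-(2) subgroups, where I must identify the relevant module $V$ and its irreducible quotients.

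\emph{Step 1: $G_p$.} Here $L$ is not abelian, so Lemma~\ref{lem:semidirect} does not apply directly, and I expect this reduction to be the main point needing care. By \eqref{eq:frattini}, $Z=\Phi(L)$. Since $Z$ is characteristic in $L$ and $L\lhd G_p$, we get $Z=\Phi(L)\leq\Phi(G_p)$; this is the standard fact that $\Phi(N)\le\Phi(G)$ for $N\lhd G$. Consequently every maximal subgroup of $G_p$ contains $Z$, and the maximal subgroups of $G_p$ correspond, with the same indices, to those of
\[
G_p/Z\cong (L/Z)\rtimes H .
\]
Now $L/Z$ is elementary abelian and $H$ is a $p'$-group, so Lemma~\ref{lem:semidirect} applies with $V=L/Z$. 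By \eqref{eq:modules} and Maschke's theorem, the maximal proper submodules of $L/Z$ are exactly $\overline X$ and $\overline Y$, and each has index $p^2$. Pulling back, the type-(2) maximal subgroups are conjugates of the preimages of $\overline X\rtimes H$ and $\overline Y\rtimes H$. These preimages are $(ZX)\rtimes H=M_X$ and $M_Y$. Since conjugation by an element of $L/Z$ lifts to conjugation by an element of $L$, these subgroups are $G_p$-conjugate to $M_X$ or $M_Y$. This gives $\I(G_p)=\{2,p^2\}$ together with the stated conjugacy claim.

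\emph{Step 2: $M_X$ (and $M_Y$).} First I check that $M_X$ is a subgroup and that Lemma~\ref{lem:semidirect} applies to it. By \eqref{eq:commutator}, two elements of $ZX$ (those with $y=y'=0$) commute, so $ZX=Z\times X$ is abelian. By the exponent computation it is elementary abelian of order $p^6$. It is $H$-invariant by the explicit action formulas. As an $H$-module, $Z\times X\cong Z\oplus X$, where $X\cong\overline X$ is irreducible of dimension $2$ and $Z$ is irreducible of dimension $4$ by \eqref{eq:tensor}. These two summands are nonisomorphic because their dimensions differ, so the only maximal proper submodules are $Z$ (with quotient of order $p^2$) and $X$ (with quotient of order $p^4$). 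Lemma~\ref{lem:semidirect} then yields $\I(M_X)=\{2,p^2,p^4\}$. Moreover, every index-$p^2$ maximal subgroup is $(Z\times X)$-conjugate, and hence $M_X$-conjugate, to $Z\rtimes H=N$. The case of $M_Y$ is identical, using $Y$ and the dual action.

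\emph{Step 3: $N$.} Here $V=Z$ is irreducible, so its unique maximal proper submodule is $0$, which has index $p^4$. Lemma~\ref{lem:semidirect} therefore gives $\I(N)=\{2,p^4\}$.
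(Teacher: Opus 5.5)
Your proposal is correct and follows essentially the same route as the paper. You reduce to $G_p/Z$ via $Z=\Phi(L)\le\Phi(G_p)$, citing the standard fact $\Phi(N)\le\Phi(G)$ for $N\lhd G$ where the paper proves it inline with the modular law; you then apply Lemma~\ref{lem:semidirect} to $G_p/Z$, $M_X$, $M_Y$ and $N$ using \eqref{eq:modules} and \eqref{eq:tensor}, and your extra checks (that $Z\times X$ is an $H$-invariant elementary abelian group and that $Z\not\cong X$ by dimension) only make explicit what the paper leaves implicit.
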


\begin{proof}
First, we show that every maximal subgroup $M$ of $G_p$ contains $Z$.  If
$Z\nleq M$, then maximality gives $G_p=ZM$, and the modular law together
with \eqref{eq:frattini} gives
\[
 L=L\cap ZM=Z(L\cap M)=\Phi(L)(L\cap M).
\]
Recall the standard Frattini fact that $L=\Phi(L)K$ for a subgroup $K\leq L$
implies $K=L$: otherwise, a maximal subgroup of $L$ containing $K$ would
contain both $K$ and $\Phi(L)$, and hence all of $L$, a contradiction.
Applying this fact with $K=L\cap M$ forces $L\cap M=L$, so $Z\leq M$,
again a contradiction.

It follows that the maximal subgroups of $G_p$ are the inverse images of
the maximal subgroups of
\[
 G_p/Z=(\overline X\oplus\overline Y)\rtimes H.
\]
Apply Lemma~\ref{lem:semidirect} and \eqref{eq:modules}.  The maximal
subgroups of the first type have index $2$, while those of the second type
have index $p^2$.  This proves the first equality and the assertion
concerning $M_X$ and $M_Y$.

In $M_X$, the normal elementary abelian subgroup $Z\times X$ is the direct
sum of nonisomorphic irreducible $H$-modules of dimensions $4$ and $2$.
Its only maximal $H$-submodules are therefore $Z$ and $X$.  Another
application of Lemma~\ref{lem:semidirect} gives the indices $2,p^2,p^4$;
the index-$p^2$ case retains $Z$ and gives an $M_X$-conjugate of $N$.
The same argument applies to $M_Y$.  Finally, $Z$ is irreducible of
dimension $4$, so Lemma~\ref{lem:semidirect} applied to $N=Z\rtimes H$
gives $\I(N)=\{2,p^4\}$.
\end{proof}

\begin{proof}[Proof of Theorem~\ref{thm:main}]
Suppose
\[
 1=G_0<G_1<\cdots<G_n=G_p
\]
is unrefinable and its indices $j_i=\lvert G_i:G_{i-1}\rvert$ are
nondecreasing.
By Lemma~\ref{lem:spectra}, $j_n\in\{2,p^2\}$.  We cannot have $j_n=2$:
then monotonicity would force every $j_i$ to be $2$, whereas
$p\mid\lvert G_p\rvert$.  Thus $j_n=p^2$, and $G_{n-1}$ is
$G_p$-conjugate to $M_X$ or $M_Y$.

Now $j_{n-1}\leq p^2$.  The middle spectrum in
Lemma~\ref{lem:spectra} shows that $j_{n-1}$ is $2$ or $p^2$.  It cannot be
$2$, by the same divisibility argument applied to $G_{n-1}$.  Hence
$j_{n-1}=p^2$, and $G_{n-2}$ is $G_p$-conjugate to $N$.

Finally, $j_{n-2}\leq p^2$.  Since $G_{n-2}$ is $G_p$-conjugate to $N$ and
$\I(N)=\{2,p^4\}$, we must have $j_{n-2}=2$.  This forces all preceding
indices to equal $2$, which is impossible because $p\mid\lvert N\rvert$.
The contradiction proves the theorem.
\end{proof}

\section{The fully explicit group \texorpdfstring{$G_3$}{G3}}

To facilitate independent verification, we give the smallest member of the
family without parameters.  All matrices below are over $\F_3$, so $-1=2$.
Let $E_{ij}$ denote the standard matrix unit, write
$e_{ij}=I_5+E_{ij}$, and set
\[
 \begin{array}{ll}
 a_s=\operatorname{diag}(s,1,I_2),&
 a_t=\operatorname{diag}(t,1,I_2),\\
 b_s=\operatorname{diag}(I_2,1,s),&
 b_t=\operatorname{diag}(I_2,1,t).
 \end{array}
\]
Then the concrete example is the following subgroup of
$\operatorname{GL}_5(3)$:
\begin{equation}\label{eq:explicit-g3}
 G_3=\langle e_{13},e_{23},e_{34},e_{35},a_s,a_t,b_s,b_t\rangle.
\end{equation}
Define
\[
 \begin{split}
 Z_3&=\langle e_{14},e_{15},e_{24},e_{25}\rangle,\\
 H_3&=\langle a_s,a_t,b_s,b_t\rangle,\\
 M_{X,3}&=\langle Z_3,e_{13},e_{23},H_3\rangle,\\
 M_{Y,3}&=\langle Z_3,e_{34},e_{35},H_3\rangle,
 \qquad N_3=\langle Z_3,H_3\rangle.
 \end{split}
\]
Here $\lvert H_3\rvert=64$.  The orders and complete maximal-index spectra
relevant to the obstruction are
\[
\begin{array}{c|c|c}
 A&\lvert A\rvert&\I(A)\\ \hline
 G_3&419904&\{2,9\}\\
 M_{X,3},M_{Y,3}&46656&\{2,9,81\}\\
 N_3&5184&\{2,81\}
\end{array}
\]
The standalone file \texttt{G3-verification.g} accompanying this manuscript
defines all eight generators in \eqref{eq:explicit-g3} as explicit
$5\times5$ matrices.  Using only standard GAP commands, it verifies the
group and subgroup orders, the five named maximal inclusions, every
conjugacy class of maximal subgroups and its index, the coverage of all
relevant classes by the named subgroups, and the result of an exhaustive
top-down search for an increasing chain.  The file has been run unchanged
under GAP 4.11.1 and GAP 4.16.0.
Thus the computation can be checked from the attached file without access
to an online repository.

\begin{remark}\label{rem:computation}
GAP computations for $p=3,5,7$ confirm the predicted spectra; for
$p=3$ these are
\[
 \{2,9\},\qquad \{2,9,81\},\qquad \{2,81\},
\]
and an exhaustive branch search finds no increasing unrefinable chain.  These
computations corroborate the theorem but are not used in its proof.  The
standalone $p=3$ file, complete code, tests, and captured outputs are
publicly available in release
\href{https://github.com/RichieSater/monotone-maximal-chains/releases/tag/v1.1.0}{v1.1.0}
of the accompanying repository~\cite{Repository}.
\end{remark}

\section*{Acknowledgments}

The author thanks V.~S.~Monakhov and I.~L.~Sokhor for bringing the question
to his attention through their correspondence.

\section*{Code and data availability}

The complete computational supplement is publicly available in the
\href{https://github.com/RichieSater/monotone-maximal-chains}{verification
repository}.  Release \texttt{v1.1.0} is the version corresponding to this
manuscript.  It includes the standalone verifier and captured outputs from
GAP 4.11.1 and GAP 4.16.0.  Source code is licensed under the MIT License,
and the manuscript and documentation are licensed under CC BY 4.0.

\section*{Declaration of generative AI and AI-assisted technologies}

During this project the author used Anthropic's Claude and OpenAI Codex for
mathematical exploration and criticism, literature-search assistance,
drafting and debugging GAP code, and manuscript organization and editing.
The author independently checked the mathematical argument, reran all tests
in the repository, and verified the cited references.  He takes full
responsibility for the content of the article.

\end{document}